\documentclass[11pt]
{amsart}
\usepackage{amssymb,amsthm,amsmath,currfile}
\textwidth=18cm
\textheight=23cm
\usepackage[english]{babel}
\usepackage{verbatim,here}
\usepackage[T1]{fontenc}
\usepackage{graphicx}
\usepackage{epstopdf}
\usepackage{floatflt}
\usepackage{color,xcolor}
\usepackage{enumerate}
\usepackage{bm}
\usepackage{a4wide}

\pagestyle{plain}

\theoremstyle{plain}
\newtheorem{theor}{Theorem}
\newtheorem{lem}{Lemma}

\theoremstyle{definition}

\usepackage{graphicx}
\usepackage{xcolor}

\begin{document}


\title{Quasiconformal reflection with respect to the boundary of an isosceles trapezoid}

\author{A.~{Kushaeva}}
\address{Kazan Federal University, Kremlyovskaya ul. 35, Tatarstan, 420008 Russia}

\author{K.~{Kushaeva}}
\address{Kazan Federal University, Kremlyovskaya ul. 35, Tatarstan, 420008 Russia}

\author{S.~{Nasyrov}}
\address{Kazan Federal University, Kremlyovskaya ul. 35, Tatarstan, 420008 Russia}



\begin{abstract}
		We establish an upper estimate for the coefficient of quasiconformal reflection with respect to the boundary of an arbitrary isosceles trapezoid in terms of its geometric parameters; the estimate improve the result obtained in the recent paper by S.~Nasyrov, T.~Sugawa and M.~Vuorinen.
\end{abstract}



\maketitle

	\section{Introduction}
	Let $L$ be a Jordan curve on the Riemann sphere $\overline{\mathbb{C}}$, separating it into two parts, $D^+$ and $D^{-}$.  A quasiconformal automorphism $f$  of $\overline{\mathbb{C}}$, reversing orientation, is called a quasiconformal reflection with respect to $L$, if the following hold: $f(D^+)=D^{-}$, $f(D^-)=D^{+}$ and every point $z\in L$ is fixed under the mapping $f$, i.e. $f(z)=z$, $z\in L$ (see, e.g. \cite[ch.~IV]{ahlf}).

It is well known (see, e.g. \cite{ahlf}) that there exists a quasiconformal reflection with respect to $L$ if and only if $L$ is a quasicircle, i.e. an image of a circle under a quasiconformal mapping $h$. If such a quasiconformal mapping  $h$ is $K$-quasiconformal, then $L$ is called a $K$-quasicircle. We note that finding the minimal value of $K$ for a given curve $L$ is a very nontrivial problem even for the case when $L$ is a polygonal line, for example, the boundary of a rectangle (see \cite[p. 455, problem~(20)]{hkv}).

An important and complicated problem is also finding, given a Jordan curve $L$, the minimal value of $K$ such that there exists a $K$-quasiconformal reflection with respecct to $L$;  we will denote such minimal $K$ by $QR_L$. For polygonal lines $L$ this problem was investigated by R.~K\"uhnau (see, e.g. \cite{kuhnau1}). In particular, he showed that if $L$ has an inscribed circle, then $QR_L=2/\alpha-1$ where $\pi\alpha$ is the minimal inner angle of $L$. The exact value of $QR_L$ for a given $L$ can be only obtained for a narrow class of curves (see the survey \cite{kruskal}), therefore, it is of interest to find upper and lower estimates for $QR_L$. In this regard we also note the recent papers \cite{kruskal2,kruskal3,kruskal4}.

For the case when $L$  is the boundary of an arbitrary  rectangle, the exact value of $QR_L$ is unknown yet with the exception of rectangles close to a square. Denote by $\Gamma_m$ the boundary of a rectangle $[0,m]\times [0,1]$, $m\ge 1$. In the paper  \cite{werner} S.~Werner  showed that, for the square ($m=1$) and rectangles close to it ($1<m<1.037$), the value $QR_{\Gamma_m}$ does not depend on $m$ and equals $3$, and at the same time  $QR_{\Gamma_m}>3$ for $m>2.76$. Moreover, in \cite{werner} the following fact is proved.

\begin{theor}\label{w} For every $m\ge 1$ we have
$$
\frac{\pi}{3}\,\,m<QR_{\Gamma_m}<\pi m.
$$
\end{theor}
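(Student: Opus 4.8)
The plan is to reduce the computation of $QR_{\Gamma_m}$ to a problem about extending a boundary homeomorphism, and then to attack the two inequalities separately. First I would fix conformal maps $\phi_+\colon \mathbb{H}\to D^+$ and $\phi_-\colon \mathbb{H}^-\to D^-$ of the upper and lower half-planes onto the interior and the exterior of the rectangle, normalized so that the four preimages of the corners are symmetric with respect to $0$ and $\infty$. Both maps are Schwarz--Christoffel integrals, so they extend homeomorphically to $\mathbb{R}$ and the \emph{welding} $h=\phi_-^{-1}\circ\phi_+\colon\mathbb{R}\to\mathbb{R}$ is a well-defined increasing homeomorphism. Conjugating a reflection $f$ by $\phi_\pm$ turns it into an orientation-reversing quasiconformal map $g=\phi_-^{-1}\circ f\circ\phi_+$ of $\mathbb{H}$ onto $\mathbb{H}^-$ with $g|_{\mathbb{R}}=h$ and the same maximal dilatation; conversely every such extension of $h$ yields a reflection. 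Hence $QR_{\Gamma_m}=\inf K(g)$ over all orientation-reversing quasiconformal extensions $g$ of $h$, and both bounds become statements about $h$ alone.

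Next I would make $h$ explicit using the symmetry of the rectangle. Writing $k$ for the modulus of the elliptic integral that uniformizes $D^+$, the corner preimages under $\phi_+$ are $\pm1,\pm1/k$ and the aspect ratio is $m=K'(k)/K(k)$ in the appropriate normalization, so $m\to\infty$ corresponds to $k\to0$ with $K'(k)\sim\log(4/k)$ and $K(k)\to\pi/2$. The analogous data for $\phi_-$ give corner preimages $\pm1,\pm1/\ell$, and $h$ is the piecewise-conformal map carrying one configuration to the other. The whole geometry is thus encoded in the mismatch between $k$ and $\ell$, which I would control through the same elliptic asymptotics.

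For the upper bound I would build an explicit extension of $h$: transport $\mathbb{H}$ to a half-strip by a logarithm, where $h$ becomes close to an affine stretch of controlled magnitude, extend it there by an explicit formula of Beurling--Ahlfors type, and transport back. Estimating the Beltrami coefficient of this extension and invoking the relation $m\sim(2/\pi)\log(1/k)$ should yield a dilatation bounded by $\pi m$ for all $m\ge1$. For the lower bound I would use the fact that any $K$-quasiconformal extension of $h$ forces a cross-ratio (quasisymmetry) inequality on $h$; evaluating this inequality on a symmetric quadruple straddling a corner preimage, and again feeding in the elliptic asymptotics, should produce the estimate $K>(\pi/3)m$.

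The main obstacle is sharpness of the constants rather than the order of growth: the modulus-comparison or quasisymmetry inequalities readily give $QR_{\Gamma_m}\asymp m$, but pinning the coefficients down to $\pi$ from above and $\pi/3$ from below requires a careful asymptotic analysis of the welding $h$ near the corner preimages and of the extremal extension there. In particular, the factor-three gap between the two bounds is exactly the loss incurred by using a non-extremal explicit extension on one side and a single cross-ratio test on the other, and closing it, which Theorem~\ref{w} does not attempt, would require the sharp extremal reflection near a right-angle corner.
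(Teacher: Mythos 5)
First, a point of reference: Theorem~\ref{w} is not proved in this paper at all --- it is S.~Werner's result, quoted from \cite{werner}, and the authors use it only as a black box (the explicit reflection of the rectangle with dilatation $\pi m$ enters the proof of Theorem~\ref{est} as the middle factor $\widetilde{K}_1=2\pi d$). So there is no in-paper proof to compare against, and your proposal has to stand on its own as a reconstruction of Werner's argument.

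As such it has a genuine gap: it is a programme, not a proof, and the programme stops exactly where the theorem begins. Both inequalities assert specific numerical constants, $\pi$ above and $\pi/3$ below, and your text derives neither --- the operative verbs are ``should yield'' and ``should produce''. The reduction of $QR_{\Gamma_m}$ to extending the conformal welding $h$ is correct, and the elliptic asymptotics $m\sim\frac{2}{\pi}\log(4/k)$ will certainly give $QR_{\Gamma_m}\asymp m$; but the theorem is precisely about the coefficients. For the upper bound, a Beurling--Ahlfors extension of $h$ in logarithmic coordinates yields $K\le Cm$ for \emph{some} $C$, and nothing in your sketch pins $C$ down to $\pi$; Werner in fact obtains $\pi m$ by constructing the reflection explicitly and piecewise in the plane (this is the very map $f_2$ that the present paper borrows), not through the welding. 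The lower bound is where the plan would most likely fail as stated: a single cross-ratio test controls $K$ only through the quasisymmetric distortion function $\lambda(K)$, which grows like $\frac{1}{16}e^{\pi K}$, so to reach $K>\frac{\pi}{3}m$ you would have to exhibit a quadruple on which $h$ distorts the cross-ratio by a quantity of the precise order $e^{\pi^2 m/3}$, and no such computation is offered. The natural way to produce the constant is instead a module comparison: a $K$-quasiconformal reflection forces $\mathrm{mod}(Q^-)\le K\,\mathrm{mod}(Q^+)$ for the quadrilateral $Q^+$ given by the rectangle itself (modulus $m$) and the exterior quadrilateral $Q^-$ on the same four vertices, so that $K\ge m/\mathrm{mod}(Q^-)$, and $\pi/3$ emerges from an estimate of that exterior modulus --- a step entirely absent from your plan. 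Until these two computations are actually carried out, neither inequality is established.
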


In the paper \cite{nsv}, the problem of estimating the coefficient of quasiconformal reflection with respect of the boundary of isosceles trapezoids was studied.

Denote by $T=T(\alpha,d)$ the  isosceles trapezoid of unit height with acute angle $\pi\alpha$ and lengths of bases $2c$  and $2d$, $c\le d$,
\begin{equation}\label{d}
d=c+\cot(\pi\alpha).
\end{equation}

In \cite{nsv} some lower and upper estimates of $QR_T$ were established; we will formulate  the results in two theorems below. Introduce the function
$$
g(\lambda)= \frac{\lambda
{\mathcal{K}}'(\lambda)}{{\mathcal{K}}(\lambda)}, \quad  \lambda\in (0,1),
$$
where ${{\mathcal{K}}(\lambda)}$ is the complete elliptic integral of the first kind, defined as
\begin{equation}\label{ek}
{\mathcal{K}}
(\lambda)=\int_0^1\frac{dt}{\sqrt{(1-t^2)(1-\lambda^2t^2)}}\,,
\end{equation}
${\mathcal{K}}'(\lambda)={\mathcal{K}}(\lambda')$, $\lambda'=\sqrt{1-\lambda^2}$.
As it is shown in \cite[lemma~5.2]{nsv},  this function is concave on $(0,1)$ and has a unique maximum at the point $\lambda_0=0.7373921\ldots$ which is a unique root on $(0,1)$ of the equation
$$
(\lambda')^2{\mathcal{K}}(\lambda){\mathcal{K}}'(\lambda)=\pi/2.
$$
Denote
$$
C(\alpha)=\left(\sqrt{1+\tan^2(\pi\alpha)/4}-\tan(\pi\alpha)/2\right)^2,\quad
0< \alpha\le 1/2.
$$

\begin{theor}[\cite{nsv}, corollary~6.4]\label{lower}
Let the ratio of lengths of the bases of a trapezoid $T=T(\alpha,d)$ satisfies $\frac{c}{d}\ge
\lambda_0$, then for the coefficient of quasiconformal reflection with respect to the boundary $L$ of $T=T(\alpha,d)$ we have $$QR_L\ge g(\lambda_0)(1+C(\alpha))d.$$ If
$\frac{c}{d}<\lambda_0$, then $$QR_L\ge g(\lambda)(1+C(\alpha))d,$$
where $\lambda=c/d$.
\end{theor}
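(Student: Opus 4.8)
The plan is to derive the estimate from the quasi-invariance of the conformal modulus of quadrilaterals under quasiconformal maps, which is the standard source of lower bounds for $QR_L$. Suppose $f$ is a $K$-quasiconformal reflection with respect to $L=\partial T$. I would choose four points on $L$ respecting the symmetry of $T$ about its vertical axis; they split $L$ into four arcs and determine an interior quadrilateral $Q^+\subset D^+$ and an exterior quadrilateral $Q^-\subset D^-$ sharing the same vertices and the same pair of opposite (distinguished) sides. Since $f$ fixes $L$ pointwise, maps $D^-$ onto $D^+$, and is $K$-quasiconformal, it carries $Q^-$ onto $Q^+$ preserving the distinguished sides; hence $M(Q^+)\le K\,M(Q^-)$ and $M(Q^-)\le K\,M(Q^+)$. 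Taking the larger of the two ratios and choosing the legs of $T$ as the distinguished sides (so that the competing curve family in $Q^-$ runs through the unbounded part of the exterior and its modulus grows with the horizontal size $2d$ of the long base) gives
\[
QR_L\ \ge\ \frac{M(Q^-)}{M(Q^+)} .
\]
The task then reduces to choosing the four points so that the right-hand side is as large as possible and to estimating the two moduli.

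The interior modulus $M(Q^+)$ is controlled by mapping $T$ conformally onto a rectangle, and for the comparison it suffices to normalize it by a constant. The essential point is to produce a lower bound of the form $M(Q^-)\ge g(\lambda)(1+C(\alpha))\,d$. I would obtain this by a domain-inclusion (monotonicity) argument: replace $D^-$ by a larger canonical symmetric configuration — the exterior of a pair of segments carried by the bases of $T$ — whose extremal length between the distinguished arcs is computed exactly in terms of complete elliptic integrals and equals a multiple of ${\mathcal{K}}'(\lambda)/{\mathcal{K}}(\lambda)$, with modulus parameter governed by the ratio $\lambda=c/d$. Because modulus is monotone under inclusion and the slanted legs only shorten this channel, the comparison yields the elliptic factor $g(\lambda)$, the linear scale $d$, and the geometric correction $(1+C(\alpha))$ accounting for the slope $\pi\alpha$ of the legs (equivalently, the exterior angles $2\pi-\pi\alpha$ and $\pi(1+\alpha)$ at the vertices).

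The admissible choices of the four points carry a free parameter, which I would arrange to coincide exactly with the modulus $\lambda\in(0,c/d\,]$ appearing above, so that every admissible configuration gives $QR_L\ge g(\lambda)(1+C(\alpha))\,d$. It then remains to optimize over $\lambda$. Since $g$ is concave on $(0,1)$ with its unique maximum at $\lambda_0$, as recalled above, it is strictly increasing on $(0,\lambda_0)$. Hence, if $c/d\ge\lambda_0$ the supremum over $\lambda\in(0,c/d\,]$ is attained at $\lambda=\lambda_0$ and equals $g(\lambda_0)$, while if $c/d<\lambda_0$ the supremum is attained at the right endpoint $\lambda=c/d$ and equals $g(c/d)$. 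This produces precisely the two cases of the statement.

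The main obstacle is the exterior estimate: one must evaluate the extremal length of the canonical comparison configuration in closed form and, above all, verify that the inclusion (or an explicit admissible-metric) argument is sharp enough to retain the exact constant $(1+C(\alpha))\,d$ rather than a weaker multiple of $d$. Keeping track of the contributions of the four corners — where the exterior angles differ from $\pi$ — and of the slope of the legs is the delicate part, and it is there that the specific algebraic form of $C(\alpha)=\big(\sqrt{1+\tan^2(\pi\alpha)/4}-\tan(\pi\alpha)/2\big)^2$ must emerge. By contrast, the general modulus inequality for $QR_L$ and the final optimization over $\lambda$ are routine once the two moduli are under control.
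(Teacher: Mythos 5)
First, a point of comparison: the paper does not prove Theorem~\ref{lower} at all --- it is imported verbatim from \cite{nsv} (Corollary~6.4) as a known result, so there is no in-paper proof to measure your argument against. Judged on its own, your proposal identifies the standard and almost certainly correct framework: if $f$ is a $K$-quasiconformal reflection with respect to $L$, then for any four points on $L$ the interior and exterior quadrilaterals $Q^{+}$, $Q^{-}$ are interchanged by $f$, whence $K\ge\max\bigl\{M(Q^{+})/M(Q^{-}),\,M(Q^{-})/M(Q^{+})\bigr\}$; and the final optimization over the configuration parameter, using the concavity of $g$ and its maximum at $\lambda_{0}$ (quoted in the paper from \cite[lemma~5.2]{nsv}), correctly reproduces the two-case structure of the statement.

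However, as a proof the proposal has a genuine gap at its center: the inequality $M(Q^{-})/M(Q^{+})\ge g(\lambda)\,(1+C(\alpha))\,d$ is never established. You describe what would be needed --- an exact extremal-length evaluation for a canonical comparison configuration giving the factor $\mathcal{K}'(\lambda)/\mathcal{K}(\lambda)$, a monotonicity/inclusion argument, and a corner analysis producing the specific constant $C(\alpha)=\bigl(\sqrt{1+\tan^{2}(\pi\alpha)/4}-\tan(\pi\alpha)/2\bigr)^{2}$ --- but you explicitly defer all of it as ``the main obstacle.'' Since this is precisely where the content of the theorem lies, the argument is an outline rather than a proof. There is also an internal inconsistency to resolve: you first fix the four distinguished points at the vertices of $T$ (taking the legs as distinguished sides), which leaves no free parameter, and later assert that the choice of points carries a free parameter $\lambda\in(0,c/d\,]$; the two-case form of the bound requires the latter, so the configuration must be specified concretely before the optimization step can be carried out.
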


\begin{theor}[\cite{nsv}, theorem~7.5]\label{nsvest}
The coefficient of quasiconformal reflection with respect to the boundary  $L=L(\alpha,d)$ of $T=T(\alpha,d)$ satisfies
\begin{equation}\label{tau}
QR_L\le (\sqrt{1+\tau^2}+\tau)^2,
\end{equation}
where
$$
\tau=\max\left\{c+d, \, \,\frac{1-c^2+d^2}{2c}\right\}.
$$
\end{theor}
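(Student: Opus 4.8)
The plan is to bound $QR_L$ from above by exhibiting one concrete orientation-reversing quasiconformal self-map $f$ of $\overline{\mathbb{C}}$ that fixes $L$ pointwise and interchanges the interior $D^+=T$ with the exterior $D^-$. Since $QR_L$ is the least $K$ for which a $K$-quasiconformal reflection exists, any such $f$ gives $QR_L\le K(f)$, so the whole task reduces to building $f$ with $K(f)\le(\sqrt{1+\tau^2}+\tau)^2$. I would first normalize by symmetry, placing $T$ with vertices $(\pm d,0)$ and $(\pm c,1)$ so that it is symmetric in the imaginary axis; this lets me construct $f$ on a half and reflect, and it groups the four sides into two symmetry classes, the two legs (slant sides) and the two bases.

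The engine of the estimate is the following elementary fact, which I would record first. An orientation-reversing affine map obtained as reflection in a line $\ell$ followed by a shear parallel to $\ell$ of slope $2\tau$ (each point displaced along $\ell$ by $2\tau$ times its distance to $\ell$) fixes $\ell$ pointwise, swaps the two sides of $\ell$, and has constant complex dilatation with
\[
K=(\sqrt{1+\tau^2}+\tau)^2.
\]
Indeed the reflection is $1$-quasiconformal, so the singular values of the composite coincide with those of the shear, whose dilatation is exactly the displayed value. This is the source of the form $(\sqrt{1+\tau^2}+\tau)^2$; it is increasing in $\tau$, so over a finite family of such pieces the maximal dilatation is governed by the largest shear slope.

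For the construction of $f$ I would cut both $D^+$ and $D^-$ into finitely many matching cells: a side cell flanking each side of $T$, corner cells near the four vertices, and one outer cell containing $\infty$. On the side cell of a side lying on a line $\ell_i$ I set $f$ to be the reflection-plus-shear of the engine, with $\ell_i$ as fixed line and with shear slope $2\tau_i$ chosen so that $f$ carries the interior side cell onto the exterior side cell and agrees with the neighbouring cells along their common edges; then $f$ fixes $L$ pointwise and swaps $D^{\pm}$ there, with local dilatation $(\sqrt{1+\tau_i^2}+\tau_i)^2$. Computing the matching slopes, one finds that the legs produce $\tau=c+d$ and the bases produce $\tau=\tfrac{1-c^2+d^2}{2c}$ (the two quantities already appearing in the statement), while the corner cells and the outer cell are filled by interpolating affine maps together with one inversive (M\"obius) map handling the neighbourhood of $\infty$, arranged so that their dilatations do not exceed the side-cell maximum. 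Taking the maximum over all cells yields $K(f)=(\sqrt{1+\tau^2}+\tau)^2$ with $\tau=\max\{c+d,\ \tfrac{1-c^2+d^2}{2c}\}$, which is the asserted bound.

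The analytic heart is not the per-cell dilatation (those are immediate from the engine) but the \emph{global compatibility}: I must choose the cell decomposition and the interpolating maps so that $f$ is a genuine homeomorphism of $\overline{\mathbb{C}}$, continuous across every interface, fixing all of $L$, and mapping $D^+$ onto $D^-$ and conversely, while keeping the dilatation of the transition cells bounded by the same $(\sqrt{1+\tau^2}+\tau)^2$. The most delicate points are the unbounded outer cell, where an inversive rather than affine map is unavoidable, and the corner cells, where two different fixed lines must be reconciled; verifying that the slopes forced by the matching conditions reduce exactly to $c+d$ and $\tfrac{1-c^2+d^2}{2c}$, and that no transition cell beats them, is where I expect the real work to lie.
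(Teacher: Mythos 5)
First, a remark on context: the paper you are working in does not prove this statement at all --- it is quoted verbatim from \cite{nsv} (Theorem~7.5 there), so there is no in-paper proof to match. Judged on its own merits, your proposal is a reasonable general strategy (and your ``engine'' is correct: for an orientation-reversing affine map one has $K=\frac{|f_{\bar z}|+|f_z|}{|f_{\bar z}|-|f_z|}$, and for reflection-plus-shear of slope $2\tau$ this is $\frac{\sqrt{1+\tau^2}+\tau}{\sqrt{1+\tau^2}-\tau}=(\sqrt{1+\tau^2}+\tau)^2$, since $(\sqrt{1+\tau^2}-\tau)(\sqrt{1+\tau^2}+\tau)=1$). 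But the proof has a genuine gap at its center: the entire content of the theorem is the identification of $\tau$ as $\max\{c+d,\ (1-c^2+d^2)/(2c)\}$, and you never derive these numbers. You write ``computing the matching slopes, one finds\dots,'' but you have not specified the cell decomposition precisely enough for any matching condition to force anything; in a piecewise-affine reflection the shear slope on a side cell is a free design parameter, not an intrinsic invariant of the side, so the claim that ``the legs produce $c+d$ and the bases produce $(1-c^2+d^2)/(2c)$'' is an assertion reverse-engineered from the statement rather than a computation. It is worth noticing what these two quantities actually are: with vertices at $(\pm d,0)$, $(\pm c,1)$, one checks that $c+d=\cot\gamma_1$ where $\gamma_1$ is the angle under which a bottom vertex sees the opposite leg, and $\frac{1-c^2+d^2}{2c}=\cot\gamma_2$ where $\gamma_2$ is the angle under which a bottom vertex sees the top base; so $\tau$ is the maximum of $\cot(\text{angle subtended by a side at a non-adjacent vertex})$. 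This strongly indicates that the route in \cite{nsv} is an application of a general bound for convex curves in terms of subtended angles (tied to the moduli of quadrilaterals that are the subject of that paper), not a shear-slope bookkeeping argument; your construction gives no reason why such ``vertex sees opposite side'' cotangents should appear as shear slopes.

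The second gap is the one you flag yourself and then skip: the global compatibility. A quasiconformal reflection must be a homeomorphism of $\overline{\mathbb{C}}$ carrying $D^+$ \emph{onto} $D^-$, so the images of your interior cells must exactly tile the exterior (including a neighbourhood of $\infty$) and vice versa; locally swapping the two sides of each side-line does not give this. The corner cells (where two fixed lines meet) and the unbounded cell require explicit maps with dilatation verified not to exceed the side-cell maximum --- this is precisely where such constructions usually lose a constant, and without it the stated bound is not established. Compare with how the present paper proves its own Theorem~\ref{est}: it avoids both difficulties by writing down the transition map \eqref{wz} in closed form on an explicit five-cell decomposition, computing $|f_{1\bar z}/f_{1z}|$ on each cell, and outsourcing the actual reflection to the known one for rectangles. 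To rescue your argument you would need to do the analogue: exhibit the cells and the formulas, and carry out the dilatation computation that produces $c+d$ and $(1-c^2+d^2)/(2c)$.
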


In \cite{nsv} it was noted that  upper estimate \eqref{nsvest}  is not sharp by order. Actually, for a fixed $\alpha$, taking into account \eqref{d}, we see that the expression in the right-hand side of
\eqref{tau} is equivalent to $Cd^2$, $C=\text{const}$, as $d\to\infty$, though for rectangles (i.e. for $\alpha=1/2$) Theorem~\ref{w} shows that the upper estimate must has the first order of growth with respect to $d$. In \cite{nsv}  it was conjectured that for trapezoids the coefficient must be estimated from above by a linear function in $d$.
In this paper we show that it is the case and prove the following result. \medskip
	
\begin{theor}\label{est} {The coefficient of quasiconformal reflection  $QR_L $ with respect to the boundary $L=L(\alpha,d)$  of an isosceles trapezoid $T=T(\alpha,d)$ satisfies }
\begin{equation}\label{qrl}
QR_L\le \dfrac{\pi\left(\sqrt{(d+c)^2+d^2(d-c)^2}+(d-c)\sqrt{1+d^2}
\right)^4}{8c^2d}\,.
\end{equation}
\end{theor}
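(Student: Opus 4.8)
The plan is to construct an explicit quasiconformal reflection across $L$ and estimate its dilatation. The natural strategy, which mirrors the approach behind Theorem~\ref{nsvest} but aims at the sharper linear order, is to build the reflection as a composition of three maps: a conformal map $\varphi$ of the exterior (or interior) domain onto a canonical region, a simple geometric reflection in that canonical region, and the inverse conformal map $\varphi^{-1}$. Because $T=T(\alpha,d)$ is an isosceles trapezoid, it has a vertical axis of symmetry, so the most economical choice is to reflect across the quadrilateral using a Schwarz--Christoffel map onto a rectangle (equivalently, onto the upper half-plane with the four vertices sent to $\pm 1,\pm 1/\lambda$), where $\lambda=c/d$ governs the conformal modulus. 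The quantity $g(\lambda)$ and the elliptic integrals appearing in the earlier theorems signal that this elliptic-function uniformization is exactly the right coordinate system.

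First I would set up coordinates placing the trapezoid symmetrically about the imaginary axis, with the two parallel bases horizontal at heights $0$ and $1$, of half-lengths $c$ and $d$; the slanted sides then have slope determined by \eqref{d}. Next I would introduce an intermediate \emph{auxiliary} map that is piecewise affine or piecewise elementary, designed so that its boundary values agree with the identity on $L$ while it carries $D^+$ to $D^-$; the cleanest realization is to triangulate or otherwise decompose a neighborhood of $L$ and define the reflection to be affine on each piece, matching continuously across the interfaces. On each affine piece the complex dilatation is constant and its magnitude is computed directly from the geometry of the two triangles (or trapezoidal cells) being exchanged. The role of the factor $(d-c)\sqrt{1+d^2}$ and of $\sqrt{(d+c)^2+d^2(d-c)^2}$ in \eqref{qrl} strongly suggests that the cell geometry involves the slant height $\sqrt{1+\cot^2(\pi\alpha)}$ and the base difference $d-c=\cot(\pi\alpha)$, so I would parametrize everything through $c$, $d$ and reduce the dilatation bound to an expression in these two quantities.

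The key computational step is converting the maximal local dilatation $k=|\mu|_\infty$ of the constructed reflection into the quasiconformality coefficient $K=(1+k)/(1-k)$, and then into $QR_L$. Here the identity $(\sqrt{1+\tau^2}+\tau)^2$ that recurs in Theorem~\ref{nsvest} is the giveaway: if a shear of amount $2\tau$ is applied (a map of the form $z\mapsto z+\tau\bar z$ type, or a linear map with a prescribed skew), its dilatation coefficient is exactly $(\sqrt{1+\tau^2}+\tau)^2$. The exponent $4$ in \eqref{qrl} indicates that two such elementary shearing maps are composed, so the bound multiplies, and the factor $\pi/(8c^2d)$ should emerge from the conformal distortion of the Schwarz--Christoffel normalization together with the area/length normalizations forced by the unit height. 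I would therefore compute the shear parameter of each constituent map in terms of $c,d$, multiply the resulting dilatation factors, and absorb the conformal normalization constant into the explicit prefactor.

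The main obstacle I anticipate is controlling the dilatation \emph{uniformly} over the whole plane rather than just near $L$: an affine cell decomposition handles a tubular neighborhood of the boundary, but the reflection must be extended to all of $D^+$ and $D^-$ (including the points at and near infinity on the exterior side) without increasing the dilatation beyond the stated bound. This is precisely where the elliptic-integral uniformization pays off, because it replaces the unbounded exterior of the trapezoid by a bounded rectangle on which a single affine (or bilinear) reflection has globally controlled dilatation; the hard part will be verifying that the pullback of this controlled reflection through $\varphi^{-1}$, after patching to the explicit near-boundary map, still obeys the clean closed-form estimate \eqref{qrl} and that the maximum of $|\mu|$ is attained at the geometrically extremal cell (the one adjacent to the acute-angle vertex, where $\tan(\pi\alpha)$ is largest). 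Establishing that this extremal cell dominates, and that no interior patching region produces a larger dilatation, is the crux of the argument and is where the concavity and monotonicity properties of $g$ and $\mathcal{K}$ from \cite{nsv} would be invoked to close the estimate.
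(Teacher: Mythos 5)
Your overall instinct --- reduce to a rectangle, reflect there, and pull back --- is the right one, and your secondary idea of a piecewise-elementary (affine/shear) map matching the identity on interfaces is exactly how the paper's auxiliary map $f_1$ is built. But the proposal as written has two genuine gaps. First, the reduction cannot be done by a conformal (Schwarz--Christoffel) map $\varphi$ of one complementary domain onto a rectangle: a reflection must be an automorphism of the whole sphere, and a conformal map of $D^+$ alone does not extend to a quasiconformal automorphism of $\overline{\mathbb{C}}$ with controlled dilatation (its derivative degenerates at the corners), so ``$\varphi^{-1}\circ(\text{reflection})\circ\varphi$'' is not defined off $D^+$ and does not produce a global reflection. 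The paper instead constructs an explicit \emph{global} piecewise smooth $\widetilde{K}$-quasiconformal map $f_1$ of the plane (five elementary pieces in the right half-plane, extended by symmetry) carrying $T$ onto the rectangle $[-d,d]\times[0,1]$; the elliptic integrals and the function $g$ play no role in the upper bound and would not help close it --- they belong to the lower-bound machinery of \cite{nsv}.

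Second, you have no source for the reflection across the rectangle itself, and this is the essential external input: the paper invokes Werner's construction \cite{werner}, which by Theorem~\ref{w} gives a $\widetilde{K}_1$-quasiconformal reflection with $\widetilde{K}_1=2\pi d$ for the rectangle of base $2d$ and height $1$. This is where the factor $\pi$ and the single power of $d$ in \eqref{qrl} come from; they are not ``conformal distortion of the Schwarz--Christoffel normalization.'' The final bound is $(\widetilde{K})^2\widetilde{K}_1$ --- the exponent $4$ arises because $f_1$ and $f_1^{-1}$ each contribute a factor $\widetilde{K}$, and $\widetilde{K}$ is itself a squared sum of the form $\bigl(\sqrt{(2-\ell)^2+\ell^2d^2}+\ell\sqrt{1+d^2}\bigr)^2/(4(1-\ell))$ with $\ell=(d-c)/d$. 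Without the global map $f_1$ and without Werner's theorem, your plan cannot reach the stated constant, and the hard step you flag (uniform control away from the boundary, including near infinity) is resolved in the paper precisely by making $f_1$ the identity below the real axis and affine far from the trapezoid, not by any uniformization argument.
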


We note that for a fixed $\alpha$ the value in the right-hand side of \eqref{qrl} is equivalent to the linear function $C_1(\alpha)d$, as $d\to \infty$,
where
\begin{equation}\label{C1a}
C_1(\alpha)=\dfrac{\pi}{8}\,\left(\sqrt{4+\cot^2(\pi\alpha)}+\cot(\pi\alpha)\right)^4.
\end{equation}

	

\section{Proof of Theorem~\ref{est}}\label{main}

	Consider an isosceles trapezoid $T=T(\alpha,d)$ of height $1$ in the upper half-plane $\mathbb{H}$, symmetric with respect to the imaginary axis,   such that its bigger base is on the real axis and  the angles at this base equal $\pi\alpha$,  $0 < \alpha < 1/2$; as we noted above, the lengths,  $c$ and $d$,   of the bases of $T$ satisfy \eqref{d}.
Our task is to construct a quasiconformal reflection $f$ with respect to $L=\partial T$.

We will construct $f$  as superposition of three mappings, quasiconformal automorphisms of the plane. The first mapping $f_1$ is piecewise smooth, it maps $T$ onto the rectangle $\Pi=[-d,d]\times[0,1]$. Moreover, $f_1$ keeps the symmetry with respect to the imaginary axis. The second mapping $f_2$ is the quasiconformal reflection with respect to the boundary of the rectangle $\Pi$, constructed in \cite{werner}. At last, the third mapping $f_3$ is inverse to~$f_1$.

Therefore, now the main problem is to construct $f_1$. First we define $f_1$ in the right half-plane $\mathbb{H}$ so that it keeps points of the imaginary axis, and then we continue it by symmetry to the whole plane.

Denote by $D$ the rectangular trapezoid that is the right half of $T$.
Then the vertices of $D$ are at the points $0$, $d$, $c+i$, and $i$.
Then we separate the right half-plane into five parts:
$$
G_1=D,	\quad G_2=\{z \in \mathbb{H} : 0 \leq \mbox{\rm Im}\ z \leq 1, z \notin D \},\quad
G_3=\{z \in \mathbb{H} : \mbox{\rm Im}\ z > 1, 0 \leq \mbox{\rm Re}\ z \leq c\},
$$
$$
G_4=\{z \in \mathbb{H} :\mbox{\rm Im}\ z > 1, \mbox{\rm Re}\ z > c\},\quad
G_5=\{z \in \mathbb{H} :\mbox{\rm Im}\ z < 0\}
$$
(Fig.\ref{ris1}).

\begin{figure}[ht] \centering
\includegraphics[width=6.1 in]{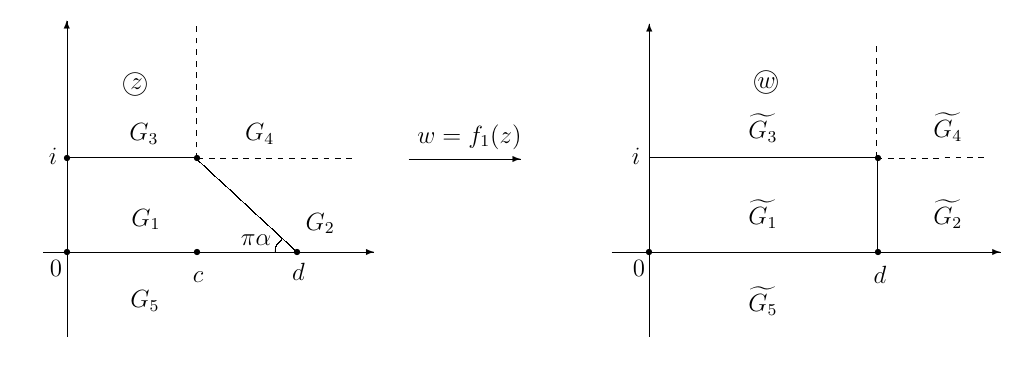}
\caption{The piecewise smooth mapping $f_1$.}\label{ris1}
\end{figure}

Denote $\ell=\frac{d-c}{d}$, $\ell\in(0,1)$.

It is easy to proof the following statement.

\begin{lem}\label{l1}
	{The function }
	\begin{equation}\label{wz}
		f_1(z)\ =\ \left\{
		\begin{array}{cr}
			\dfrac{4z+i\ell(z-\overline{z})^2}{4+i2\ell(z-\overline{z})},&\quad \quad z\in G_1,
			\\[3mm]
			\quad  z - \dfrac{i(d-c)}{2}\,(z-\overline{z}),&\quad z\in G_2,
			\\[3mm]
			\dfrac{(2-\ell)z+\ell \overline{z}}{2(1-\ell)},&\quad z\in G_3,
			\\[3mm]
			z+(d-c),&\quad z\in G_4,
			\\[1mm]
			z,&\quad z\in G_5,
     	\end{array}\right.
	\end{equation}
is a piecewise smooth quasiconformal automorphism of the right half-plane, mapping the trapezoid $G_1$ onto the rectangle $\widetilde{G}_1$ with vertices at $0$, $i$, $d+i$ and $d$.
\end{lem}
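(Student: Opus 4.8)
The plan is to push everything into real Cartesian coordinates, since then each of the five formulas collapses to something transparent. Writing $z=x+iy$ and using $z-\overline z=2iy$, a direct substitution shows that on $G_1$ the map becomes $f_1(z)=\dfrac{x}{1-\ell y}+iy$, on $G_2$ it is $f_1(z)=x+(d-c)y+iy$, on $G_3$ it is $f_1(z)=\dfrac{x}{1-\ell}+iy$, while on $G_4$ and $G_5$ it is the translation $z+(d-c)$ and the identity $z$. In particular $\operatorname{Im}f_1=y$ on $G_1,G_2,G_3$, so these three pieces preserve horizontal levels and the remaining bookkeeping becomes essentially one–dimensional in $x$. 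Throughout I would use the two identities $d-c=\ell d$ and $1-\ell=c/d$.

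First I would check continuity of $f_1$ across the four interfaces. The decisive computation is on the slanted side of the trapezoid, the segment $x=d(1-\ell y)$, $0\le y\le1$: substituting into the $G_1$-formula gives $f_1=d+iy$, and substituting into the $G_2$-formula gives $d-(d-c)y+(d-c)y+iy=d+iy$, the same value, so the slant is straightened to the vertical segment $\operatorname{Re}w=d$, $0\le\operatorname{Im}w\le1$. The other interfaces (the level $y=1$ between $G_1$ and $G_3$; the ray $y=1$, $x\ge c$ between $G_2$ and $G_4$; the ray $x=c$ between $G_3$ and $G_4$; the level $y=0$ between $G_1,G_2$ and $G_5$) are verified by the same one-line substitutions.

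Next I would identify the images and conclude bijectivity. Level by level one finds $G_1\mapsto\widetilde G_1=[0,d]\times[0,1]$, $G_2\mapsto\{0\le y\le1,\ x>d\}$, $G_3\mapsto\{0\le x\le d,\ y>1\}$, $G_4\mapsto\{x>d,\ y>1\}$, and $G_5\mapsto G_5$. These five images tile the closed right half-plane without overlap, which together with the continuity already established shows that $f_1$ is a homeomorphism of the right half-plane onto itself; in particular $G_1$ goes onto the asserted rectangle $\widetilde G_1$ with vertices $0,i,d+i,d$. I would also record that $f_1$ fixes the imaginary axis pointwise: on $G_1,G_3,G_5$ the equation $x=0$ forces $f_1=iy$. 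This is exactly what is needed to continue $f_1$ by reflection to the whole plane.

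Finally, quasiconformality. On $G_3,G_4,G_5$ the map is affine, with constant complex dilatation $\mu=\ell/(2-\ell)$, $0$, $0$ respectively; on $G_2$ a direct computation gives $f_{1,z}=1-ia$, $f_{1,\overline z}=ia$ with $a=(d-c)/2$, hence $|\mu|^2=a^2/(1+a^2)$. All of these are strictly below $1$. On $G_1$, the only nonaffine piece, I would compute the partials and reduce the sense-preserving inequality $|f_{1,\overline z}|<|f_{1,z}|$ to the trivially true $(p-1)^2<(p+1)^2$, where $p=(1-\ell y)^{-1}>0$; moreover the Jacobian equals $(1-\ell y)^{-1}>0$. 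The main—and essentially only—obstacle is this $G_1$ estimate, but once the simplified form $f_1=x/(1-\ell y)+iy$ is in hand it is routine, and since $\overline{G_1}$ is compact with $1-\ell y\ge1-\ell>0$ there, $|\mu|$ stays bounded away from $1$ on $G_1$. Collecting the finitely many pieces, the dilatation of $f_1$ is uniformly bounded by some $k<1$; because the interfaces are straight-line (hence analytic) arcs, they are removable for quasiconformality, so $f_1$ is a piecewise smooth quasiconformal automorphism of the right half-plane carrying $G_1$ onto $\widetilde G_1$, as claimed.
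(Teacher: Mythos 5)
Your proof is correct. The paper offers no argument for this lemma (it is dismissed as ``easy to prove''), but your reduction to Cartesian form --- $f_1=\frac{x}{1-\ell y}+iy$ on $G_1$, $x+(d-c)y+iy$ on $G_2$, $\frac{x}{1-\ell}+iy$ on $G_3$ --- is precisely the representation the authors themselves use later when estimating the dilatation in $G_1$, and your continuity check on the slant $x=d(1-\ell y)$, the level-by-level identification of the images $\widetilde G_k$, the positivity of the Jacobian $(1-\ell y)^{-1}$, and the removability of the straight interfaces correctly supply the verification the paper omits.
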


We note that constructed function \eqref{wz} maps the domains $G_k$ onto the domains $\widetilde{G}_k$ shown in Fig.1 on the right.

Now we will estimate the dilatation $$D_{f_1}(z)=\frac{|{f_1}_{z}|+|{f_1}_{\bar{z}}|}{|{f_1}_{z}|-|{f_1}_{\bar{z}}|}$$
of the mapping~$f_1$.

\begin{lem}\label{l1}	
	{In the domains $G_k$, the dilatation $D_{f_1}$ of $f_1$ can be estimated as follows:}
	\begin{equation}\label{Df1}
		D_{f_1}(z)\ \leq\  \left\{
		\begin{array}{cr}
			\dfrac{\quad \left(\sqrt{(2-\ell)^2+\ell^2d^2}+\ell\sqrt{1+d^2} \right)^{2}}{4(1-\ell)}\,,&z\in G_1,
			\\[4mm]
			\dfrac{(d-c+\sqrt{(d-c)^2+4})^2}{4}\,,&z\in G_2,
			\\[4mm]
			{1}/{(1-\ell)},&z\in G_3,
			\\[4mm]
			1,&z\in G_4\cup G_5.
		\end{array}
\right.
	\end{equation}
\end{lem}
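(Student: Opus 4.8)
The plan is to compute the complex dilatation $\mu_{f_1}=(f_1)_{\bar z}/(f_1)_z$ piecewise on each of the five regions $G_k$, bound $|\mu_{f_1}|$, and then use the elementary identity
\[
D_{f_1}=\frac{1+|\mu_{f_1}|}{1-|\mu_{f_1}|}.
\]
The two trivial cases come first: on $G_5$ the map is the identity, so $\mu_{f_1}=0$ and $D_{f_1}=1$, and on $G_4$ the map is the translation $z\mapsto z+(d-c)$, which is again conformal with $\mu_{f_1}=0$ and $D_{f_1}=1$, giving the last line of \eqref{Df1}. For $G_3$ the map $z\mapsto\bigl((2-\ell)z+\ell\overline z\bigr)/(2(1-\ell))$ is affine, so $(f_1)_z=(2-\ell)/(2(1-\ell))$ and $(f_1)_{\bar z}=\ell/(2(1-\ell))$ are constants; then $|\mu_{f_1}|=\ell/(2-\ell)$ and a one-line simplification yields $D_{f_1}=(2)/(2-2\ell)=1/(1-\ell)$, matching the third line.

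Next I would treat $G_2$, where $f_1(z)=z-\tfrac{i(d-c)}{2}(z-\overline z)$. Writing $z-\overline z=2i\,\mathrm{Im}\,z$ this is again affine in $z,\overline z$: differentiating gives $(f_1)_z=1-\tfrac{i(d-c)}{2}$ and $(f_1)_{\bar z}=\tfrac{i(d-c)}{2}$, both constant. Setting $t=(d-c)/2$ one finds $|\mu_{f_1}|=|t|/\sqrt{1+t^2}$, and feeding this into $D=(1+|\mu|)/(1-|\mu|)=(\sqrt{1+t^2}+|t|)/(\sqrt{1+t^2}-|t|)=(\sqrt{1+t^2}+t)^2$; substituting $t=(d-c)/2$ and clearing denominators reproduces the second line $\bigl((d-c)+\sqrt{(d-c)^2+4}\bigr)^2/4$. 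In all three affine cases the only work is algebraic simplification of a ratio of the form $(1+s)/(1-s)$ into the squared-surd form displayed.

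The genuine difficulty is $G_1$, where the map is the nonlinear M\"obius-type expression
\[
f_1(z)=\frac{4z+i\ell(z-\overline z)^2}{4+i2\ell(z-\overline z)}.
\]
Here $(f_1)_z$ and $(f_1)_{\bar z}$ are no longer constant, so $|\mu_{f_1}(z)|$ varies over the trapezoid and I must \emph{maximize} $D_{f_1}$ over $G_1$. I would write $z=x+iy$, set $w=z-\overline z=2iy$, and carefully differentiate the quotient using the quotient rule, keeping $z$ and $\overline z$ as independent variables. The resulting $|\mu_{f_1}|$ should depend only on $y$ (and the parameters $\ell,d$) because of the structure of the formula; the expected outcome is that the supremum is attained on the boundary segment where the trapezoid meets the slanted side, i.e.\ at the extreme value of $y$ relevant to $G_1$, producing the constant
\[
\frac{\bigl(\sqrt{(2-\ell)^2+\ell^2d^2}+\ell\sqrt{1+d^2}\bigr)^{2}}{4(1-\ell)}.
\]
I anticipate the main obstacle to be the algebra of this maximization: after computing $|\mu_{f_1}|^2$ as a rational function of $y$, one must show it is monotone in $y$ (or locate its critical point) so that the extremum sits at the correct vertex, and then recognize the value $(1+|\mu|)/(1-|\mu|)$ in the stated surd form. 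A useful sanity check throughout is continuity of the dilatation bounds across the shared boundaries $G_1\cap G_2$, $G_2\cap G_3$, etc., which should agree at $\ell\to 0$ (the rectangle case) where every bound collapses to $1$.
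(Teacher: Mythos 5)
Your treatment of $G_2$--$G_5$ is correct and is exactly what the paper means by calling those cases ``almost evident'': on each of those regions $f_1$ is affine in $(z,\overline z)$ (or the identity), so $\mu_{f_1}$ is constant and the displayed constants follow from $(1+|\mu|)/(1-|\mu|)$ by the algebra you describe. The gap is in $G_1$, which is the only substantive case. Your plan rests on the expectation that $|\mu_{f_1}|$ ``should depend only on $y$ because of the structure of the formula.'' That is false. Substituting $z-\overline z=2iy$ collapses the M\"obius-type expression to the shear $u+iv$ with $u=x/(1-\ell y)$, $v=y$, whence $u_y=\ell x/(1-\ell y)^2$ and
\[
\left|\frac{(f_1)_{\overline z}}{(f_1)_z}\right|^2=\frac{(\ell y)^2(1-\ell y)^2+(\ell x)^2}{(2-\ell y)^2(1-\ell y)^2+(\ell x)^2}\,,
\]
which depends on $x$ as well as on $y$. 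Indeed the stated bound contains the terms $\ell^2d^2$ and $\sqrt{1+d^2}$, which record the horizontal extent of the trapezoid; a dilatation depending only on $y$ could never produce them.

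Consequently the maximization you defer to ``the algebra'' is genuinely two-dimensional, and this is where the paper does its real work: for fixed $y\in[0,1]$ the ratio above is increasing in $x$ on $0\le x\le d(1-\ell y)$, so the maximum over $x$ sits on the slanted side and equals $\bigl((\ell y)^2+(\ell d)^2\bigr)/\bigl((2-\ell y)^2+(\ell d)^2\bigr)$; this in turn is increasing in $y\in[0,1]$, so the supremum is $(1+d^2)/\bigl((2/\ell-1)^2+d^2\bigr)$, attained at the vertex $c+i$. Feeding $|\mu|=\sqrt{(1+d^2)/((2/\ell-1)^2+d^2)}$ into $(1+|\mu|)/(1-|\mu|)$ and rationalizing then gives the first line of \eqref{Df1}. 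Your proposal guesses the location of the extremum roughly correctly but for the wrong reason, and without the monotonicity-in-$x$ step the argument does not close.
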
	

\begin{proof}  The estimates in the domains $G_k$, $2\le k\le 5$ is almost evident, therefore, we will only prove the estimate in $G_1$. For $z=x+iy\in G_1$ we have
		\begin{equation*}
			f_1(x,y) = u(x,y)+iv(x,y)
		\end{equation*}
where
\begin{equation*}
u(x,y)=\dfrac{x}{1-\ell y},\quad 	v(x,y)=y.
\end{equation*}
Then
$$
u_{x}=\dfrac{1}{1-\ell y},\quad  v_{x}=0,\quad  u_{y}=\dfrac{\ell x}{(1-\ell y)^2},\quad  v_{y}=1,
$$
consequently,
\begin{equation*}
{f_1}_{z}=\dfrac{1}{2} \left(\dfrac{2-\ell y}{1-\ell y} -i\dfrac{\ell x}{(1-\ell y)^2}\right),
\quad
{f_1}_{\overline{z}}=\dfrac{1}{2} \left(\dfrac{\ell y}{1-\ell y} +i\dfrac{\ell x}{(1-\ell y)^2}\right).
\end{equation*}
		\par Therefore,
\begin{equation}\label{fzz}
\left|\dfrac{{f_1}_{\overline{z}}}{{f_1}_{z}}\right|=\sqrt{\dfrac{(\ell y)^2(1-\ell y)^2+(\ell x)^2}{(2-\ell y)^2(1-\ell y)^2+(\ell x)^2}}\,.
\end{equation}
For a fixed $y\in[0,1]$, the value of $x$ satisfies $0\le x\le d(1-\ell y)$. Since the right-hand side in \eqref{fzz} is an increasing function with respect to $x$ for a fixed $y$, its maximal value is attained at $x=d(1-\ell y)$. Therefore, 
\begin{equation}\label{fzz1}
\left|\dfrac{{f_1}_{\overline{z}}}{{f_1}_{z}}\right|\le\sqrt{\dfrac{(\ell y)^2+(\ell d)^2}{(2-\ell y)^2+(\ell d)^2}}\,.
\end{equation}
At last, because of the fact that the right-hand side of \eqref{fzz} is an increasing function with respect to $y\in[0,1]$, we obtain
\begin{equation}\label{fzz1}
\left|\dfrac{{f_1}_{\overline{z}}}{{f_1}_{z}}\right|\le\sqrt{\dfrac{1+d^2}{(2/\ell -1)^2+d^2}}\,.
\end{equation}
From this we deduce that
$$
D_{f_1}=\dfrac{|{f_1}_{z}|+|{f_1}_{\overline{z}}|}{|{f_1}_{z}|-|{f_1}_{\overline{z}}|} \le \dfrac{\sqrt{(2/\ell -1)^2+d^2}+\sqrt{1+d^2}}{\sqrt{(2/\ell -1)^2+d^2}-\sqrt{1+d^2}}=\dfrac{\left(\sqrt{(2-\ell)^2+\ell^2d^2}+\ell\sqrt{1+d^2} \right)^{2}}{4(1-\ell)}\,
$$
and the needed estimate is proved. \end{proof}

Now we will  estimate the coefficient of quasiconformness of the mapping $f_1$, defined in the right half-plane by \eqref{l1}, comparing the estimates obtained in Lemma~\ref{l1}.

\begin{lem}\label{l3}
 {The mapping $f_1$ is $\widetilde{K}$-quasiconformal where
\begin{equation}\label{wK}
  \widetilde{K} = \dfrac{\left(\sqrt{(2-\ell)^2+\ell^2d^2}+\ell\sqrt{1+d^2} \right)^{2}}{4(1-\ell)}\,.
\end{equation}
  }
\end{lem}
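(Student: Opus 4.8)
The plan is to combine the pointwise dilatation bounds \eqref{Df1} with the standard fact that the maximal dilatation of a piecewise smooth quasiconformal automorphism equals the essential supremum of its pointwise dilatation $D_{f_1}$. Since the arcs separating $G_1,\dots,G_5$ form a set of measure zero, this essential supremum is $\max_{1\le k\le5}\sup_{z\in G_k}D_{f_1}(z)$. By \eqref{Df1} each inner supremum is controlled by an explicit quantity, so it suffices to check that the bound attached to $G_1$, which is exactly $\widetilde{K}$, dominates the three remaining bounds; then the maximal dilatation $K(f_1)\le\widetilde{K}$, which is the assertion.

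It is convenient to abbreviate $A=\sqrt{(2-\ell)^2+\ell^2d^2}$ and $B=\ell\sqrt{1+d^2}$. Using the identity $A^2-B^2=4(1-\ell)$ one rewrites the $G_1$-bound as $\widetilde{K}=(A+B)/(A-B)$, i.e. in the form $(1+t)/(1-t)$ with $t=B/A\in[0,1)$. Since $(1+t)/(1-t)$ is increasing in $t$, all comparisons reduce to comparisons of the corresponding ratios $t$, which is where the argument becomes transparent.

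The bounds for $G_4\cup G_5$ and $G_3$ can be disposed of directly: the first is $1\le\widetilde{K}$; for the second, $\widetilde{K}\ge 1/(1-\ell)$ is equivalent to $(A+B)^2\ge 4$, hence to $A+B\ge2$, which holds because $A\ge2-\ell$ and $B\ge\ell$. The main obstacle is the comparison with $G_2$. Writing $C=d-c=\ell d$ and $E=\sqrt{C^2+4}$, the $G_2$-bound $\frac{(d-c+\sqrt{(d-c)^2+4})^2}{4}$ becomes $(C+E)/(E-C)$, the dilatation with ratio $t=C/E$. By monotonicity, $\widetilde{K}$ dominates this bound exactly when $B/A\ge C/E$, i.e. $BE\ge AC$. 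I would square this (all terms being positive) and cancel the common factor $\ell^2$, reducing it to the polynomial inequality
\[
(1+d^2)(\ell^2d^2+4)\ \ge\ d^2\bigl((2-\ell)^2+\ell^2d^2\bigr).
\]
Expanding both sides, the difference collapses to $4(1+\ell d^2)>0$, which settles the inequality. Thus every bound in \eqref{Df1} is at most $\widetilde{K}$, so $K(f_1)\le\widetilde{K}$ and $f_1$ is $\widetilde{K}$-quasiconformal. I expect this final algebraic reduction to be the only delicate point; the remaining comparisons are routine once everything is put in the form $(1+t)/(1-t)$.
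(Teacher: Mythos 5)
Your proposal is correct and follows essentially the same route as the paper: reduce everything to comparing the majorants from Lemma~2, observe that the $G_3$, $G_4$, $G_5$ bounds are easily dominated, and settle the only nontrivial case $G_1$ versus $G_2$ by comparing the Beltrami ratios (your $t=B/A$ versus $t=C/E$ is exactly the paper's comparison of $|{f_1}_{\bar z}/{f_1}_{z}|$), which leads to the same polynomial inequality with difference $4(1+\ell d^2)>0$. The only difference is that you write out the algebra that the paper dismisses as ``evident.''
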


\begin{proof}
To establish the statement of the lemma we need to compare the majorants of $D_{f_1}$ in different domains $G_k$. It is easy that in $G_1$  the majorant from Lemma~\ref{l1} is bigger that those in  $G_3$, $G_4$ and $G_5$. Therefore, it remains to compare the majorants in $G_1$ and $G_2$. Since in $G_2$ we have
$$
\left|\dfrac{{f_1}_{\overline{z}}}{{f_1}_{z}}\right|=\dfrac{d-c}{\sqrt{4+(d-c)^2}}=\dfrac{\ell d}{\sqrt{4+(\ell c)^2}},
$$
and in $G_1$ estimate \eqref{fzz1} holds, it is sufficient to prove the inequality
$$
\dfrac{\ell d}{\sqrt{4+(\ell d)^2}}\le \sqrt{\dfrac{1+d^2}{(2/\ell -1)^2+d^2}}\,=\dfrac{\ell\sqrt{1+d^2}}{\sqrt{(2-\ell)^2+(\ell d)^2}}\,,
$$
which is evident.
\end{proof}

\textit{Proof of Theorem~\ref{est}.} We continue $f_1$, by symmetry, to the whole complex plane. As a result, we obtain a $\widetilde{K}$-quasiconformal mapping of $\mathbb{C}$ onto itself such that the isosceles trapezoid $T=T(\alpha,d)$ is mapped onto the rectangle with base length $2d$ and height~$1$.
After simple transformations, taking into account the equalities
$$
\ell=\frac{\cot (\pi\alpha)}{d}, \quad 1-\ell=\frac{c}{d}\,,\quad 2-\ell=\frac{c+d}{d},
$$
from \eqref{wK} we obtain
$$
  \widetilde{K} = \dfrac{\left(\sqrt{(d+c)^2+d^2\cot^2(\pi\alpha)}+\cot(\pi\alpha)\sqrt{1+d^2}
  \right)^2}{4cd}\,.
$$

Let $f_2$ be the quasiconformal reflection with respect to the boundary of the indicated rectangle built in \cite{werner}. By Theorem~\ref{w}, it is $\widetilde{K}_1$-quasiconformal with coefficient $\widetilde{K}_1=2\pi d$.
Since the superposition of a $ K_{1}$-quasiconformal and $K_{2}$-quasiconformal  mappings is $ K_{1}K_{2} $-quasiconformal, we wee that $f_1^{-1}\circ f_2\circ f_1$ is a $(\widetilde{K})^2\widetilde{K}_1$-quasiconformal mapping, moreover, it is a reflection with respect to $L$. Thus, Theorem~\ref{est} is proved.
\hfill $\square$

Using the reasoning applied in the proof of Theorem~\ref{est}, we can prove the following fact.

\begin{theor}\label{paral}
Let $\Pi=\Pi(\alpha,a)$ be a parallelogram with acute angle $\pi \alpha$, two parallel sides of which have length $a$, $a>\cot(\pi\alpha)$. 
The coefficient of quasiconformal reflection with respect to the boundary $L=L(\alpha,a)$ of $\Pi$ satisfies
$$
QR_L \le \frac{\pi\left(\sqrt{4a^2+(a+\cot(\pi\alpha))^2\cot^2(\pi\alpha)}+\cot(\pi\alpha)\sqrt{4+(a+\cot(\pi\alpha))^2}\right)^4}{16(a+\cot(\pi\alpha))(a-\cot(\pi\alpha))^2}\,.
$$
\end{theor}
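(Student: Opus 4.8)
The plan is to reduce Theorem~\ref{paral} to Theorem~\ref{est} by observing that a parallelogram is glued from two rectangular trapezoids of exactly the type already handled. Put $t=\cot(\pi\alpha)$; the hypothesis $a>\cot(\pi\alpha)$ gives $a-t>0$. I would place $\Pi=\Pi(\alpha,a)$ of unit height with its length-$a$ sides on the lines $\Im z=0$ and $\Im z=1$, the lower one being $[0,a]$ and the slanted sides having horizontal offset $t$, so that $\Pi$ has vertices $0$, $a$, $a+t+i$, $t+i$. The vertical line $\Re z=(a+t)/2$ then splits $\Pi$ into two rectangular trapezoids; since $a>t$ this line meets both bases, and each half is congruent to the right half $D$ (vertices $0$, $d$, $c+i$, $i$) of the isosceles trapezoid $T(\alpha,d)$ with
\[
d=\frac{a+t}{2},\qquad c=\frac{a-t}{2},
\]
which indeed satisfy $c\le d$ and $d=c+\cot(\pi\alpha)$, i.e. \eqref{d}.

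Granting this, the construction in the proof of Theorem~\ref{est} transfers essentially verbatim. I would take the piecewise map \eqref{wz} built for $T(\alpha,(a+t)/2)$ and transport it by the two rigid motions carrying $D$ onto the two halves of $\Pi$; the two copies agree on the line $\Re z=(a+t)/2$ (each fixes its own vertical side) and together send $\Pi$ onto the rectangle $[0,a+t]\times[0,1]$. On each half the dilatation is controlled exactly as in the lemmas leading to \eqref{wK}, so $f_1$ is $\widetilde K$-quasiconformal with $\widetilde K$ given by \eqref{wK} for $d=(a+t)/2$. The only genuinely new point, and the step I expect to cost the most care, is that the two halves are now glued by the central symmetry of $\Pi$ rather than by the reflection symmetry that the trapezoid enjoyed; accordingly I would define $f_1$ in the half-plane $\Re z\ge(a+t)/2$ (on the right half and on the surrounding strip, above and below), extend it through the point reflection about the center $((a+t)/2,1/2)$, and check anew that the transported formula \eqref{wz} defines a globally continuous, piecewise-smooth quasiconformal automorphism of $\mathbb C$ obeying the dilatation bound \eqref{Df1}. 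These are the same elementary verifications as in Theorem~\ref{est}.

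Once $f_1$ is in place the conclusion is immediate. Let $f_2$ be Werner's reflection (Theorem~\ref{w}) in the rectangle $[0,a+t]\times[0,1]$, which has aspect ratio $a+t$, so that $f_2$ is $\widetilde K_1$-quasiconformal with $\widetilde K_1=\pi(a+t)$; then $f_1^{-1}\circ f_2\circ f_1$ is a $(\widetilde K)^2\widetilde K_1$-quasiconformal reflection with respect to $L=\partial\Pi$. Since $\widetilde K$ is \eqref{wK} with $d=(a+t)/2$ and $\widetilde K_1=2\pi d=\pi(a+t)$, the estimate $(\widetilde K)^2\widetilde K_1$ is precisely the right-hand side of \eqref{qrl} evaluated at $d=(a+t)/2$, $c=(a-t)/2$. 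Using $d+c=a$ and $d-c=t$ to simplify,
\[
\frac{\pi\left(\sqrt{(d+c)^2+d^2(d-c)^2}+(d-c)\sqrt{1+d^2}\right)^4}{8c^2d}=\frac{\pi\left(\sqrt{4a^2+(a+t)^2t^2}+t\sqrt{4+(a+t)^2}\right)^4}{16(a+t)(a-t)^2},
\]
which is the asserted bound.
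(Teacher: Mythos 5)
Your proposal is correct and follows essentially the same route as the paper: split the parallelogram by the vertical line through its center into two rectangular trapezoids, apply the map \eqref{wz} on one half, glue by the central (point) symmetry rather than the mirror symmetry, and then invoke Werner's reflection for the rectangle $[0,a+t]\times[0,1]$. Your explicit substitution $d=(a+t)/2$, $c=(a-t)/2$ into \eqref{qrl} correctly reproduces the stated bound, which the paper leaves implicit.
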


The proof of Theorem~\ref{paral} is also based on the idea of reduction of the problem to reflection with respect to the boundary of a rectangle.  We can consider that the parallelogram is symmetric with respect to the point $i/2$ and the sides of length are horizontal. The imaginary  axis separates $\Pi$ into two rectangular trapezoids. Then, when constructing the mapping $f_1$ in the right half-plane, we use the same formulas, as above, and then continue the mapping to the left half-plane using  the central symmetry with respect to the point $i/2$, not mirror one.
\medskip

In conclusion, we compare the results given in Theorems~\ref{nsvest} and \ref{est} with a fixed angle $\pi\alpha$, as $d\to\infty$. The majorant $(\sqrt{1+\tau^2}+\tau)^2$ from Theorem~\ref{nsvest} is equivalent to the value $4\tau^2\sim 4(c+d)^2\sim16 d^2$, $d\to\infty$, as long as the majorant from Theorem~\ref{est} is equivalent to $C_1(\alpha)d$ where $C_1(\alpha)$ is defined in \eqref{C1a}. This shows that is $\pi\alpha$ is close to $\pi/2$, then the estimate from Theorem~\ref{est} gives better results. At the same time, of $\pi\alpha$ is close zero, then for small values of $d$ the esitmates from Theorem~\ref{nsvest} may be better.

In Fig.\ref{ris2} we give the graphs of majorants from Theorems~\ref{nsvest} and \ref{est} as functions of the smaller side length $c$\footnote{For convenience, here we use the  smaller base of
length $c$, since the set of all admissible values for $c$ coincide with $\mathbb{R}_+$; for a fixed $\alpha$,  the lengths $d$ and $c$ differ by a constant, see \eqref{d}.} of the trapezoid for $\alpha=0.45$ and $\alpha=0.3$, illustrating our conclusions.

\begin{figure}[h]\label{ris2}
\begin{minipage}[h]{0.49\linewidth}
\center{\includegraphics[width=0.8\linewidth]{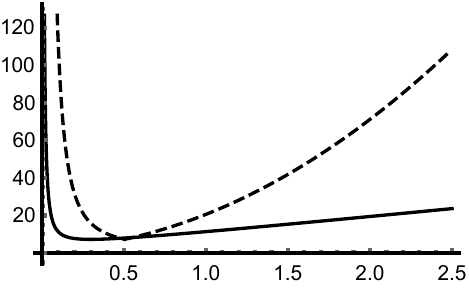} \\ a)}
\end{minipage}
\hfill
\begin{minipage}[h]{0.49\linewidth}
\center{\includegraphics[width=0.8\linewidth]{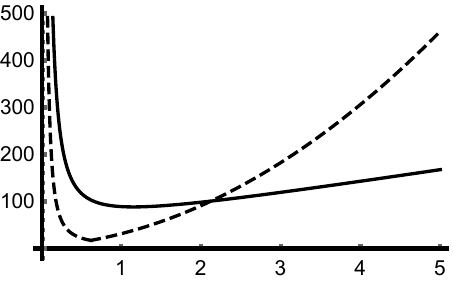} \\ b)}
\end{minipage}
            \caption{The graphs of the majorants for $QR_L$ from Theorems~\ref{nsvest} (dashed lines) and \ref{est} (solid lines) in the cases a)~$\alpha=0.45$  and b)~$\alpha=0.3$.}
\label{ris:image1}
\end{figure}

\medskip

The work of the third author was performed under the development program of the Volga Region Mathematical Center (agreement no. 075-02-2024-1438).

\end{document}